\title{Tilting preserves finite global dimension}
\author[Bernhard Keller]{Bernhard Keller}
\author[Henning Krause]{Henning Krause}
\address{Universit\'e Paris Diderot -- Paris~7, UFR de
  Math\'ematiques, Institut de Math\'ematiques de Jussieu--PRG, UMR
  7586 du CNRS, Case 7012, B\^atiment Sophie Germain, 75205 Paris
  Cedex 13, France}
\email{bernhard.keller@imj-prg.fr}
\address{Fakult\"at f\"ur Mathematik,
Universit\"at Bielefeld, 33501 Bielefeld, Germany}
\email{hkrause@math.uni-bielefeld.de}
\theoremstyle{plain}
\newtheorem{thm}{Theorem}
\newtheorem{lem}[thm]{Lemma}
\theoremstyle{definition}
\theoremstyle{remark}
\newtheorem{rem}[thm]{Remark}
\numberwithin{equation}{section}
\newcommand{\add}{\operatorname{add}}
\newcommand{\colim}{\operatorname*{colim}}
\newcommand{\End}{\operatorname{End}}
\newcommand{\Ext}{\operatorname{Ext}}
\newcommand{\gldim}{\operatorname{gl{.}dim}}
\newcommand{\Hom}{\operatorname{Hom}}
\newcommand{\Inj}{\operatorname{Inj}}
\newcommand{\Lcolim}{\operatorname*{\mathbf{L}colim}}
\newcommand{\Lex}{\operatorname{Lex}}
\renewcommand{\mod}{\operatorname{mod}}
\newcommand{\Mod}{\operatorname{Mod}}
\newcommand{\pcoh}{\operatorname{pcoh}}
\newcommand{\proj}{\operatorname{proj}} 
\newcommand{\Proj}{\operatorname{Proj}}
\newcommand{\RHom}{\operatorname{\mathbf{R}Hom}}
\newcommand{\Rlim}{\operatorname*{\mathbf{R}lim}}
\newcommand{\Ab}{\mathrm{Ab}}
\newcommand{\op}{\mathrm{op}}
\newcommand{\col}{\colon}
\newcommand{\iso}{\xrightarrow{\raisebox{-.4ex}[0ex][0ex]{$\scriptstyle{\sim}$}}}
\newcommand{\longiso}{\xrightarrow{\ \raisebox{-.4ex}[0ex][0ex]{$\scriptstyle{\sim}$}\ }}
\newcommand{\lto}{\longrightarrow}
\newcommand*{\intref}[2]{\def\tmp{#1}\ifx\tmp\empty\hyperref[#2]{\ref*{#2}}\else\hyperref[#2]{#1~\ref*{#2}}\fi}
\def\A{\mathcal A}
\def\D{\mathcal D}
\def\T{\mathcal T} 
\def\U{\mathcal U}
\def\V{\mathcal V}
\def\bfi{\mathbf i}
\def\bfL{\mathbf L}
\def\bfR{\mathbf R}
\def\bfD{\mathbf D} 
\def\bfK{\mathbf K}
\def\bfL{\mathbf L}
\def\bbN{\mathbb N}
\def\bbZ{\mathbb Z}
\def\e{\varepsilon}
\def\t{\tau}
\def\La{\Lambda}
\def\Si{\Sigma}
\begin{document}

\begin{abstract}
  Given a tilting object of the derived category of an abelian
  category of finite global dimension, we give (under suitable
  finiteness conditions) a bound for the global dimension of its
  endomorphism ring.
\end{abstract}

\keywords{Derived category, tilting object, t-structure, global dimension}

\subjclass[2020]{18G80 (primary); 18G20 (secondary)}

\date{27 April, 2020}
\maketitle

\section*{Introduction}

Tilting theory \cite{AHK2007} allows us to construct derived equivalences
in various settings. Prime examples are the derived equivalences
between algebras obtained from tilting modules \cite{Ha1988} or
tilting complexes \cite{Ri1989} and the derived equivalences between
algebras and (non commutative) varieties obtained from tilting
bundles, cf.\ for example \cite{Be1978, Kapranov88, GeigleLenzing85,
  Baer88}. An important consequence of the existence of a derived
equivalence is the agreement of various subordinate invariants.  For
instance, the Grothendieck group \cite{Ri1989} and Hochschild
cohomology \cite{Ha1989,Ri1991,Keller04} are preserved.  Another invariant is
the finiteness of global dimension, to which this note is devoted. It
is well-known that finiteness of global dimension is preserved when
two algebras are linked by a tilting module \cite[III.3.4]{Ha1988} or
a tilting complex \cite[12.5]{GaRo92}. Similar facts hold in the
geometric examples.  It seems natural to unify the algebraic and
geometric examples by considering the following general question:

 \emph{Given a
  tilting object $T$ in the (bounded) derived category of an abelian
  category $\A$, does finite global dimension of $\A$ imply finite
  global dimension of the endomorphism ring of $T$?}

Despite the ubiquity of tilting objects in algebra and geometry, there
seems to be no general result in the literature which guarantees that
tilting preserves finite global dimension, even when the category $\A$
is hereditary.\footnote{Theorem~6.1 in \cite{Le2007} claims that
  $\End(T)$ has finite global dimension when $\A$ is hereditary, but
  the proof seems to be incomplete.}  An explanation may be possible
confusion about the very definition of a tilting object. In fact,
there are various possible definitions in the literature, and we need
to clarify this point.

Let $\A$ be an abelian category. By definition, its \emph{global dimension}
is the infimum of the integers $d$ such that $\Ext_\A^i(-,-)=0$ for all $i>d$. Denote by
$\bfD(\A)$ the derived category of $\A$. Fix an object $T\in\bfD(\A)$ and set
$\La=\End(T)$.   We assume that $\Hom(T,\Si^i T)=0$ for all $i\neq 0$.

We consider \emph{two settings} for $T$ to be a tilting object,
depending on whether the abelian category $\A$ is essentially small or
not. For the first setting, we focus on the bounded derived category
$\bfD^b(\A)$ of objects with cohomology concentrated in finitely many
degrees.  Then we define $T\in\bfD^b(\A)$ to be \emph{tilting} if
$\bfD^b(\A)$ equals the thick subcategory generated by
$T$.\footnote{Often the following weaker condition is used:
  $\Hom(T,\Si^i X)=0$ for all $i\in\bbZ$ implies $X=0$. This is not
  sufficient in our context.}  For example, if $\Gamma$ is a right coherent
ring of finite global dimension and $\A$ the abelian category
$\mod\Gamma$ of finitely presented right $\Gamma$-modules, then the
object $T$ of $\bfD^b(\A)$ is tilting if and only if it is isomorphic
to a tilting complex in the sense of \cite{Ri1989}.

\begin{thm}\label{th:main}
  Let $T\in\bfD^b(\A)$ be tilting. Suppose that $\A$ is noetherian,
  that is, each object in $\A$ is noetherian.  Then the global
  dimension of $\La$ is at most $2d+t$, where $d$ is the global
  dimension of $\A$ and $t$ the smallest integer such that $H^iT=0$
  for all $i$ outside an interval of length $t$. Moreover,
  $\RHom(T,-)$ induces a triangle equivalence
  $\bfD^b(\A)\iso\bfD^b(\mod\La)$ when $\La$ is right coherent.
\end{thm}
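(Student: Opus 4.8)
The plan is to exploit the triangle equivalence $\bfD^b(\A)\iso\Thick(T)$ to transport a bounded-above $t$-structure-type resolution from $\A$ to the module side. First I would fix a generator situation: since $T$ is tilting, $\La=\End(T)$ and the functor $F=\RHom(T,-)\colon\bfD^b(\A)\to\bfD(\Mod\La)$ sends $T$ to the free module $\La$. The key observation is that for any $X\in\bfD^b(\A)$ the complex $FX$ has finitely generated total cohomology over $\La$ precisely when $\A$ is noetherian and $X$ lies in $\bfD^b(\A)$ — this is where the noetherian hypothesis enters, guaranteeing that $\Hom_{\bfD^b(\A)}(T,\Si^i X)$ is a finitely generated $\La$-module for each $i$, and that only finitely many of them are nonzero. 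So $F$ lands in the subcategory of $\bfD(\Mod\La)$ of complexes with bounded, finitely generated cohomology, which (when $\La$ is right coherent) is $\bfD^b(\mod\La)$; essential surjectivity onto $\bfD^b(\mod\La)$ then follows since $\La$ generates both sides as a thick subcategory and $F(\Thick(T))=\Thick(\La)=\bfD^b(\mod\La)$. That settles the "moreover" clause.

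For the global dimension bound I would argue as follows. It suffices to bound $\pdim_\La M$ for the simple $\La$-modules, or more robustly, to show $\Ext^n_\La(M,N)=0$ for all $M,N\in\mod\La$ and all $n>2d+t$; equivalently, working in $\bfD^b(\mod\La)$, that $\Hom_{\bfD^b(\mod\La)}(M,\Si^n N)=0$ in that range. Via the equivalence $F^{-1}$ this translates to: for $X,Y\in\bfD^b(\A)$ with cohomology in a single degree (the images of $M$ and $N$ — note these need not be complexes concentrated in one degree, and that is the crux), one has $\Hom_{\bfD^b(\A)}(X,\Si^n Y)=0$ for $n>2d+t$. The point is that while $M$ is concentrated in degree $0$, the object $X=F^{-1}M$ can have cohomology spread across an interval, but only of length at most $t$: indeed $X\in\Thick(T)$ and one shows by a filtration/dévissage on the number of cohomologies of $X$ that $X$ is built from copies of shifts $\Si^i T$ with $i$ ranging over an interval governed by where $H^*X$ lives, which sits inside an interval of length $t$ once we use that $H^iT=0$ outside an interval of that length. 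Granting that $X$ and $Y$ each have cohomology concentrated in an interval of length at most $t$, a standard hypercohomology spectral sequence argument gives
\[
\Hom_{\bfD^b(\A)}(X,\Si^n Y)
\]
a filtration with subquotients $\Ext^j_\A(H^aX,H^bY)$ where $a-b$ ranges over an interval of length at most $t$ (more precisely $|a-b|\le t$ after suitable normalization) and $0\le j\le d$ by finite global dimension of $\A$; the total degree $n=j+(b-a)$ is therefore at most $d+t$. Pushing through the factor of two requires handling the resolution of $X$ by $T$'s more carefully: $X$ is obtained from $t$ successive cones, so the relevant bound is $2d+t$ rather than $d+t$ — one loses an extra $d$ for each of the (at most two) "ends" of the interval, or more precisely the spectral sequence for $\RHom$ in $\bfD^b(\A)$ has to be compared with the one computing $\Ext_\La$, and the mismatch between the two filtrations is what forces $2d$.

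The main obstacle I expect is precisely this last point: controlling how the cohomological spread of $X=F^{-1}M$ interacts with the global dimension bound, i.e.\ proving the sharp constant $2d+t$ rather than something weaker like $3d+t$ or an interval-length-dependent bound. Concretely, one must show that an object of $\Thick(T)$ whose image under $F$ is concentrated in degree zero has $H^*$ supported in an interval of length $\le t$, and then run a two-step filtration argument (first resolve by shifts of $T$ on the $\A$-side, reducing to $\Hom(T,\Si^n Y)$-type terms; then resolve $Y$ similarly) so that the two contributions of $d$ add rather than compound. I would set this up via the natural $t$-structure on $\bfD^b(\mod\La)$ pulled back along $F$, compare it to the standard $t$-structure on $\bfD^b(\A)$, and measure the discrepancy — the heart of the pulled-back $t$-structure is $\mod\La$, its objects have $\A$-cohomology in an interval of length $t$, and cohomology amplitude plus $d$ for each of source and target yields the claimed bound. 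Making the bookkeeping on these amplitudes precise, especially at the two boundary degrees of the interval, is the delicate part.
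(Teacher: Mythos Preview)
Your argument has a circularity that the paper's approach is specifically designed to avoid. You propose to prove the global dimension bound by pulling $\La$-modules $M,N$ back along $F^{-1}$ to objects $X,Y\in\bfD^b(\A)$ and then computing $\Ext^n_\La(M,N)$ as $\Hom_{\bfD^b(\A)}(X,\Si^n Y)$. But at this stage you only know that $F$ gives an equivalence $\bfD^b(\A)\iso\Thick(\La)=\per\La$; the equality $\Thick(\La)=\bfD^b(\mod\La)$ that you invoke for essential surjectivity is \emph{equivalent} to $\gldim\La<\infty$, which is what you are trying to prove. So an arbitrary $M\in\mod\La$ need not lie in the image of $F$, and $F^{-1}M$ is undefined. (Your proposed role for the noetherian hypothesis---to force $\Hom(T,\Si^i X)$ to be finitely generated---is also misplaced: that already follows from $X\in\Thick(T)$ without any noetherian assumption.)

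The paper breaks this circularity by embedding $\A$ into the Grothendieck category $\bar\A=\Lex(\A^\op,\Ab)$; the noetherian hypothesis is used precisely here, to ensure that $\bfD^b(\A)$ identifies with the compact objects of $\bfD(\bar\A)$ and that $T$ remains tilting there. In the unbounded setting one has an \emph{unconditional} equivalence $\bfD(\bar\A)\iso\bfD(\Mod\La)$, so every $\La$-module corresponds to some object of $\bfD(\bar\A)$ and the t-structure comparison (essentially the idea in your final paragraph, made precise in the paper's Lemmas~\ref{le:tilt-t-structure-1} and~\ref{le:tilt-t-structure-2}) goes through. The bound $2d+t$ is obtained first, and only \emph{then} is the bounded equivalence deduced. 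Incidentally, your amplitude bookkeeping is off: objects in the heart of the $\La$-t-structure have $\A$-cohomology in an interval of length $d+t$, not $t$; the inclusion $\D(\La)^{\ge 0}\subseteq\D^{\ge -d-t}$ already costs one $d$, and the Hom-vanishing in $\bfD^b(\A)$ across a gap of width $d$ supplies the second, giving $2d+t$---not ``$d$ for each of source and target'' added to an amplitude of $t$.
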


For our second setting, assume that $\A$ is a Grothendieck category so
that $\bfD(\A)$ has arbitrary (set-indexed) coproducts given by
coproducts of complexes. Recall that an object $C$ of $\bfD(\A)$ is
called \emph{compact} if the functor $\Hom(C,-)$ commutes with
arbitrary coproducts. Each compact object lies in $\bfD^b(\A)$,
cf.~\intref{Lemma}{lemma:compact}. Then we define $T\in\bfD(\A)$ to be
\emph{tilting} if it is compact and $\bfD(\A)$ equals the the
localizing subcategory generated by $T$ (the closure under
$\Sigma^{\pm 1}$, extensions and arbitrary coproducts). For example,
if $\A$ is the category $\Mod \Gamma$ of all right modules over a ring
$\Gamma$, then the tilting objects in $\bfD(\A)$ are precisely those
isomorphic to tilting complexes in the sense of \cite{Ri1989}.

\begin{thm}\label{th:main2}
  Let $T\in\bfD(\A)$ be tilting.  Then $\RHom(T,-)$ induces a triangle
  equivalence $\bfD(\A)\iso\bfD(\Mod\La)$ and $\gldim\La\le 2d+t$,
  where $d$ and $t$ are defined as in \intref{Theorem}{th:main}.
\end{thm}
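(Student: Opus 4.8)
The plan is to deduce the equivalence from the derived Morita theorem, and then to bound $\gldim\La$ by locating the heart of the transported $t$-structure inside $\bfD(\A)$ relative to the standard one.

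\textbf{The equivalence.} Since $T$ is a compact generator of $\bfD(\A)$, the derived Morita theorem (Keller) yields a triangle equivalence $\RHom(T,-)\colon\bfD(\A)\iso\bfD(\mathcal E)$ with $\mathcal E=\REnd(T)$ the dg endomorphism algebra of a $K$-injective resolution of $T$. The hypothesis $\Hom(T,\Si^iT)=0$ for $i\ne0$ says exactly that $H^i\mathcal E=0$ for $i\ne0$ and $H^0\mathcal E=\La$; hence the morphisms of dg algebras $\mathcal E\leftarrow\tau_{\leq0}\mathcal E\to H^0\mathcal E=\La$ are quasi-isomorphisms, giving $\bfD(\mathcal E)\simeq\bfD(\Mod\La)$ and, after composition, the asserted equivalence $F:=\RHom(T,-)\colon\bfD(\A)\iso\bfD(\Mod\La)$, with $F(T)\cong\La$ the free module.

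\textbf{Reduction of the bound.} Choose an interval $[a,b]$ with $b-a=t$ and $H^iT=0$ for $i\notin[a,b]$, and set $\mathcal H:=F^{-1}(\Mod\La)$, the heart of the $t$-structure on $\bfD(\A)$ obtained by transporting the standard one along $F$. Every $\La$-module equals $FX$ for some $X\in\mathcal H$, and $\Ext^m_\La(FX,FY)\cong\Hom_{\bfD(\A)}(X,\Si^mY)$ because $F$ is an equivalence, so it suffices to show $\Hom_{\bfD(\A)}(X,\Si^mY)=0$ for $X,Y\in\mathcal H$ and $m>2d+t$. I would deduce this from the \emph{key claim} that $\mathcal H\subseteq\bfD^{[a-d,\,b]}(\A)$ (cohomology in degrees between $a-d$ and $b$) together with the elementary vanishing $\Hom_{\bfD(\A)}(U,V)=0$ whenever $U$ has cohomology in degrees $\geq\alpha$, $V$ in degrees $\leq\beta$, and $\alpha>\beta+d$ (d\'evissage, using $\Ext^i_\A(-,-)=0$ for $i>d$): for $m>2d+t$ one has $\Si^mY\in\bfD^{\leq b-m}(\A)$ and $a-d>(b-m)+d$, whence $\Hom_{\bfD(\A)}(X,\Si^mY)=0$.

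\textbf{Proof of the key claim.} Both inclusions rest on recognizing the tilted aisle and co-aisle as minimal cocomplete ones. First, $\bfD^{\leq0}(\Mod\La)$ is the smallest full subcategory of $\bfD(\Mod\La)$ closed under $\Si$, extensions and coproducts containing $\La$ (resolve a module by free modules and present it as a homotopy colimit of bounded complexes of free modules); transporting along $F$, the tilted aisle $F^{-1}\bfD^{\leq0}(\Mod\La)$ is the smallest such subcategory containing $T$, and since $T\in\bfD^{\leq b}(\A)$ while $\bfD^{\leq b}(\A)$ is closed under $\Si$, extensions and coproducts, we get $\mathcal H\subseteq F^{-1}\bfD^{\leq0}(\Mod\La)\subseteq\bfD^{\leq b}(\A)$. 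Dually, $\bfD^{\geq0}(\Mod\La)$ is the smallest full subcategory closed under $\Si^{-1}$, extensions and products containing a fixed injective cogenerator $E$ of $\Mod\La$ (resolve by injectives and take a homotopy limit), so $F^{-1}\bfD^{\geq0}(\Mod\La)$ is the smallest such subcategory containing $F^{-1}E$. Here is the heart of the matter: for $X\in\bfD^{\leq a-d-1}(\A)$ a degree count from $T\in\bfD^{[a,b]}(\A)$ and $\Ext^i_\A=0$ for $i>d$ gives $\RHom(T,X)\in\bfD^{\leq-1}(\Mod\La)$, hence $\Hom_{\bfD(\A)}(T,X)=H^0\RHom(T,X)=0$, and because $E$ is injective and concentrated in degree $0$,
\[
  \Hom_{\bfD(\A)}(X,F^{-1}E)\;\cong\;\Hom_{\bfD(\Mod\La)}(FX,E)\;\cong\;\Hom_\La\!\bigl(H^0FX,\,E\bigr)\;=\;\Hom_\La\!\bigl(\Hom_{\bfD(\A)}(T,X),\,E\bigr)\;=\;0 .
\]
Thus $F^{-1}E\in\bigl(\bfD^{\leq a-d-1}(\A)\bigr)^{\perp}=\bfD^{\geq a-d}(\A)$, and the latter is closed under $\Si^{-1}$, extensions and products, so $\mathcal H\subseteq F^{-1}\bfD^{\geq0}(\Mod\La)\subseteq\bfD^{\geq a-d}(\A)$; together with the previous inclusion this gives the key claim, hence the theorem. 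The main obstacle is precisely this lower bound---pinning down where $F^{-1}E$ lies; the only other point needing care is the convergence of the homotopy (co)limit presentations and of the hyper-$\Ext$ spectral sequences used in the degree counts, which is unproblematic since the complexes in question are bounded on the relevant side.
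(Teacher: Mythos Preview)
Your overall strategy—transport the standard t-structure along $F=\RHom(T,-)$ and bound where the heart $\mathcal H$ sits relative to the standard t-structure on $\bfD(\A)$—is the same as the paper's. The execution differs in one place, and there is one genuine gap.

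\textbf{The gap.} The assertion that $\bfD^{\ge 0}(\Mod\La)$ is the \emph{smallest} full subcategory closed under $\Si^{-1}$, extensions and products containing a fixed injective cogenerator $E$ is not a standard fact and fails without closure under direct summands (for $\La=k\times k$, not every injective module is a product of copies of $E$, and you cannot split off summands using only those three operations). Fortunately the detour through $E$ is unnecessary. You already establish that $FX=\RHom(T,X)\in\bfD^{\le-1}(\Mod\La)$ for every $X\in\bfD^{\le a-d-1}(\A)$; this says $F\bigl(\bfD^{\le a-d-1}(\A)\bigr)\subseteq\bfD^{\le-1}(\Mod\La)$, and since $F$ is an equivalence, passing to right orthogonals yields directly
\[
F^{-1}\bigl(\bfD^{\ge0}(\Mod\La)\bigr)\ \subseteq\ \bigl(\bfD^{\le a-d-1}(\A)\bigr)^{\perp}\ =\ \bfD^{\ge a-d}(\A),
\]
which is precisely the lower half of your key claim. (This is in fact how the paper argues its co-aisle inclusion, just with a weaker exponent.)

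\textbf{Comparison with the paper.} With the fix above, the two proofs compare t-structures in the same way; the difference is that the paper first proves only $F^{-1}\bigl(\bfD^{\ge0}(\Mod\La)\bigr)\subseteq\bfD^{\ge -d-t-2}(\A)$, because it invokes its unbounded Hom-vanishing lemma (which carries a built-in loss of $2$ from the $\Rlim$ triangle), obtains $\gldim\La\le 2d+t+4$, and then bootstraps: knowing $\gldim\La<\infty$ it restricts the equivalence to bounded derived categories and reruns the comparison there, using the sharp bounded vanishing, to reach $2d+t$. Your argument avoids this two-step process by exploiting from the start that $T$ is \emph{bounded}: d\'evissage on $T$ is finite, and for the residual computation of $\Hom_{\bfD(\A)}(A,W)$ with $A\in\A$ and $W$ merely bounded above, the hyper-Ext spectral sequence converges because $\Ext^p_\A(A,-)=0$ for $p>d$ bounds it in the $p$-direction. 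So once the co-aisle step is repaired as above, your route is a legitimate and slightly more direct alternative to the paper's bootstrap.
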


We deduce \intref{Theorem}{th:main} from \intref{Theorem}{th:main2}.
The proof uses t-structures and the strategy is inspired by
\cite[12.5]{GaRo92}.  For \intref{Theorem}{th:main2}, we compare the
canonical t-structure on $\bfD(\A)$ with the canonical one on
$\bfD(\Mod\La)$; this yields the bound for the global dimension of
$\La$.  For \intref{Theorem}{th:main}, we embed $\A$ into a
Grothendieck category $\bar\A$ and employ the fact that a tilting
object $T\in\bfD^b(\A)$ identifies with a tilting
object in the unbounded derived category $\bfD(\bar\A)$.

\section*{t-structures and finite global dimension}

Let $\T$ be a triangulated category with suspension
$\Si\colon\T\iso\T$. A pair $(\U,\V)$ of full additive subcategories
is called \emph{t-structure} provided the following holds
\cite{BBD1982}:
\begin{enumerate}
\item $\Si \U\subseteq\U$ and  $\Si^{-1} \V\subseteq\V$.
\item $\Hom(X,Y)=0$ for all $X\in\U$ and $Y\in\V$. 
\item For each $X\in\T$ there exists an exact triangle
  $X'\to X\to X''\to\Si X'$ such that $X'\in\U$ and $X''\in\V$.
\end{enumerate}

We consider the following example. Let $\A$ be an abelian category and
$\T=\bfD(\A)$ its derived category. For $n\in\bbZ$ set
\[\T^{\le n}:=\{X\in\T\mid H^iX=0\textrm{ for all }i > n\},\]
and
\[\T^{> n}:=\{X\in\T\mid H^iX=0\textrm{ for all }i \le n\}.\]
Then we have $\T^{\le n}=\Si^{-n}\T^{\le 0}$ and
$\T^{> n}=\Si^{-n}\T^{> 0}$ for all $n\in\bbZ$. For each $X\in\T$ the
truncations in degree $n$ provide an exact triangle
\[\t_{\le n}X\lto X\lto\t_{>n}X \lto \Si (\t_{\le n}X)\]
with $\t_{\le n}X\in \T^{\le n}$ and $\t_{> n}X\in \T^{> n}$. Thus the
pair $(\T^{\le 0},\T^{>0})$ is a t-structure and called
\emph{canonical t-structure} on $\bfD(\A)$. Note that the canonical
t-structure restricts to the one on $\bfD^b(\A)$.

\begin{lem}\label{le:can-t-structure-gldim}
  Let  $(\D^{\le 0},\D^{>0})$ denote the canonical t-structure on
  $\bfD^b(\A)$. Then the global dimension of $\A$ is bounded by $d$ if
  and only if $\Hom(X,Y)=0$ for all $X\in\D^{\ge 0}$ and
  $Y\in\D^{<-d}$.
\end{lem}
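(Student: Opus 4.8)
The plan is to reduce both implications, via the truncation triangles of the canonical t-structure, to complexes concentrated in a single degree, where the statement is exactly the defining vanishing of $\Ext_\A^i(-,-)$ for $i>d$. I write $\ell(Z)$ for the (finite, since we work in $\bfD^b(\A)$) number of integers $i$ with $H^iZ\ne 0$, and use that $\D^{\ge 0}=\T^{>-1}$ consists of complexes with cohomology only in degrees $\ge 0$, while $\D^{<-d}=\T^{\le -d-1}$ consists of complexes with cohomology only in degrees $\le -d-1$.

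For the implication from $\gldim\A\le d$ to the vanishing, I would induct on $\ell(X)+\ell(Y)$ for $X\in\D^{\ge 0}$ and $Y\in\D^{<-d}$. If $\ell(X)=0$ or $\ell(Y)=0$ there is nothing to prove. If $\ell(X)=\ell(Y)=1$, then $X\simeq\Si^{-a}M$ with $M=H^aX$, $a\ge 0$, and $Y\simeq\Si^{-b}N$ with $N=H^bY$, $b\le -d-1$, so $\Hom(X,Y)\cong\Ext_\A^{a-b}(M,N)$ vanishes because $a-b\ge d+1>d$. If $\ell(X)\ge 2$, choose $a$ minimal with $H^aX\ne 0$; in the truncation triangle $\t_{\le a}X\to X\to\t_{>a}X\to\Si\t_{\le a}X$ one has $\t_{\le a}X\simeq\Si^{-a}H^aX$ with $\ell=1$ and $\t_{>a}X\in\D^{\ge 0}$ with $\ell=\ell(X)-1$, so the two neighbours of $\Hom(X,Y)$ in the long exact sequence obtained by applying $\Hom(-,Y)$ both vanish by induction, whence $\Hom(X,Y)=0$. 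The case $\ell(Y)\ge 2$ is symmetric: pick $b$ maximal with $H^bY\ne 0$ and use the triangle $\t_{\le b-1}Y\to Y\to\t_{>b-1}Y\to\Si\t_{\le b-1}Y$, where $\t_{>b-1}Y\simeq\Si^{-b}H^bY$ and $\t_{\le b-1}Y\in\D^{<-d}$, applying $\Hom(X,-)$.

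For the converse, it suffices to note that for $M,N\in\A$ and $i>d$ the complex $M$ placed in degree $0$ lies in $\D^{\ge 0}$, while $\Si^iN$, i.e.\ $N$ placed in degree $-i$, lies in $\D^{<-d}$ since $-i\le -d-1$; hence the hypothesis yields $\Ext_\A^i(M,N)=\Hom(M,\Si^iN)=0$, and as $M$, $N$, $i$ are arbitrary this gives $\gldim\A\le d$.

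The argument has no real obstacle; the only points to be careful about are that the canonical t-structure on $\bfD^b(\A)$ and its truncation triangles are available for an arbitrary abelian category $\A$ (as recalled just before the statement, so that no projectives or injectives are needed), and the one-step gap between the degree ranges defining $\D^{\ge 0}$ and $\D^{<-d}$, which is precisely what forces the Ext-degree $a-b$ (respectively $i$) to exceed $d$.
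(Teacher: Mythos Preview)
Your proof is correct and follows essentially the same approach as the paper: both reduce the general statement, via the truncation triangles of the canonical t-structure, to the case where $X$ and $Y$ have cohomology concentrated in a single degree, where the vanishing is exactly the definition of $\gldim\A\le d$. The paper phrases this reduction as passing to the subquotients of the finite filtrations $X=\t_{\ge 0}X\twoheadrightarrow\t_{\ge 1}X\twoheadrightarrow\cdots$ and $\cdots\rightarrowtail\t_{<-d-1}Y\rightarrowtail\t_{<-d}Y=Y$, while you formalise it as an induction on $\ell(X)+\ell(Y)$; these are equivalent ways of organising the same d\'evissage.
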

\begin{proof}
  For objects $A,A'\in\A$ and $i\in\bbZ$ we have
  $\Ext^i(A,A')\cong\Hom(A,\Si^iA')$.  Thus the global dimension of
  $\A$ is bounded by $d$ if and only if for all objects
  $X,Y\in\bfD^b(\A)$ with cohomology concentrated in a single degree
  we have $\Hom(X,Y)=0$ when $X\in\D^{\ge 0}$ and $Y\in\D^{<-d}$.  The
  assertion of the lemma follows since for $X\in\D^{\ge 0}$ and
  $Y\in\D^{<-d}$, the truncations induce finite filtrations
  \[X=\t_{\ge 0}X\twoheadrightarrow \t_{\ge 1}X\twoheadrightarrow 
    \t_{\ge 2}X\twoheadrightarrow   \cdots\]
  and
  \[\cdots \rightarrowtail \t_{< -d-2}Y \rightarrowtail \t_{< -d-1}Y \rightarrowtail \t_{< -d}Y=Y\]
  such that each subquotient has its cohomology concentrated in a
  single degree $i$, with $i\ge 0$ for the subquotients of $X$ and $i<-d$
  for the subquotients of $Y$.
  \end{proof}

  We wish to extend this lemma from $\bfD^b(\A)$ to $\bfD(\A)$. To
  this end, we fix a Grothendieck category $\A$. Let us recall some
  basic facts about derived limits and colimits in $\bfD(\A)$. We will
  use derived functors in the sense of Deligne \cite[1.2]{Deligne73}.
  Recall that one of the most pleasant properties of Deligne's definition
  is that for an adjoint pair $(F,G)$, if the derived functors exist,
  they still form an adjoint pair $(\bfL F, \bfR G)$, see for
  example section~13 of \cite{Keller96}. Let $I$ denote a small
  category. We write $\A^I$ for the Grothendieck category of functors $I \to \A$.
  The diagonal functor $\Delta\colon \A \to \A^I$ taking an object to the
  constant functor has a left adjoint $\colim$ and a right adjoint $\lim$.
  Let us examine their derived functors. Since the functor $\Delta$ is
  exact, its left and right derived functors exist and are canonically
  isomorphic to the induced functor $\bfD(\A)\to \bfD(\A^I)$. 
  For general $I$, the existence of $\Lcolim$ is
  unclear but if $I$ is filtered, then $\colim$ is exact (by the definition
  of a Grothendieck category) and so its left derived functor exists
  and is canonically isomorphic to the induced functor $\bfD(\A^I) \to \bfD(\A)$,
  which we still denote by $\colim$. This implies in particular that
  arbitrary coproducts exist in $\bfD(\A)$ and are computed by
  coproducts in the category of complexes. For arbitrary $I$, the
  category $\A^I$ is still a Grothendieck category. This implies that
  the right derived functor $\Rlim$ exists and is computed as
  $\Rlim X = \lim \bfi X$, where $X \to \bfi X$ is a homotopy injective
  resolution in the homotopy category of $\A^I$, see for example
  Theorem~14.3.4 in \cite{KashiwaraSchapira05}. In particular, this
  implies that products of arbitrary set-indexed families $(X_i)_{i\in I}$
  of objects of $\bfD(\A)$ exist in $\bfD(\A)$ and are computed as 
  products in the category of complexes
  \[
  \prod_{i\in I} \bfi X_i \; ,
  \]
  where $X_i \to \bfi X_i$ is a homotopy injective resolution for
  each $i\in I$. For example, if $X_i$ is homologically left bounded, then
  for $\bfi X_i$, we may take any strictly left bounded complex with
  injective components quasi-isomorphic to $X_i$.

\begin{lem}\label{le:derived-co-limits}
  For each complex $X\in\bfD(\A)$ its truncations induce exact triangles
\[
\begin{tikzcd}
  \Si^{-1} X \arrow{r} &\coprod\limits_{p\ge 0} \t_{\le p}X \arrow{r}
  & \coprod\limits_{p\ge 0} \t_{\le p}X \arrow{r} &X
\end{tikzcd}
\]  
and
\[
\begin{tikzcd}
\Rlim\limits_{q\le 0} \t_{\ge q}X  \arrow{r}
&\prod\limits_{q\le 0} \t_{\ge q}X
\arrow{r} & \prod\limits_{q\le 0} \t_{\ge q}X \arrow{r} &
\Si\big(\Rlim\limits_{q\le 0} \t_{\ge q}X\big).
\end{tikzcd}
\]
Moreover, we have $X\iso \Rlim \t_{\ge q}X$ when the injective
dimension of each $H^nX$ admits a global bound not depending on $n$ and
$H^nX=0$ for $n\gg 0$.
\end{lem}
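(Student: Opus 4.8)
The first triangle is the standard "telescope" construction adapted to truncations. The plan is to use that for $p \ge 0$ the canonical maps $\t_{\le p}X \to \t_{\le p+1}X$ fit into a sequence whose "homotopy colimit" is $X$. Concretely, I would write down the map $1 - \text{shift}\colon \coprod_{p\ge 0}\t_{\le p}X \to \coprod_{p\ge 0}\t_{\le p}X$ (where "shift" is built from the structure maps $\t_{\le p}X\to\t_{\le p+1}X$) and complete it to an exact triangle; the third term is then identified with $X$ by checking that the induced map $\coprod_{p\ge 0}\t_{\le p}X \to X$ (summing the truncation maps) becomes an isomorphism after applying each $H^n$. Since $H^n\t_{\le p}X = H^nX$ for $p\ge n$ and $0$ for $p<n$, and since $H^n$ commutes with the coproduct (coproducts in $\bfD(\A)$ are computed degreewise by the discussion preceding the lemma, and $H^n$ commutes with filtered colimits in a Grothendieck category), the colimit computation is immediate: the telescope exact sequence $0\to\coprod_{p}H^n\t_{\le p}X \xrightarrow{1-\text{shift}}\coprod_p H^n\t_{\le p}X\to H^nX\to 0$ holds in $\A$, which gives the long exact sequence forcing the cone to be $X$. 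The shift in degree by $\Si^{-1}$ on the left is just the usual rotation of the telescope triangle.

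The second triangle is formally dual, but now using products and $\Rlim$ rather than coproducts and $\colim$; here I must be careful because $H^n$ does \emph{not} commute with products or with $\Rlim$ in general. Instead, the argument should be purely formal: $\Rlim$ over the poset $(\bbZ_{\le 0},\ge)$ — equivalently an inverse system indexed by $\bbN$ — is computed by the standard "Milnor sequence" exact triangle $\Rlim_q Y_q \to \prod_q Y_q \xrightarrow{1-\text{shift}} \prod_q Y_q \to \Si\Rlim_q Y_q$, valid in any triangulated category with countable products (and $\bfD(\A)$ has all products by the preceding discussion). Applying this with $Y_q = \t_{\ge q}X$ and the structure maps $\t_{\ge q-1}X \to \t_{\ge q}X$ gives exactly the displayed triangle. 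I would either cite this Milnor-sequence fact or reprove it by the same telescope trick applied in $\bfD(\A)^{\op}$.

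For the final assertion, the claim is that the natural map $X \to \Rlim_q \t_{\ge q}X$ is an isomorphism under the stated hypotheses (bounded injective dimension $b$ of all $H^nX$, and $H^nX = 0$ for $n \gg 0$). The plan is to compute $\Rlim$ using homotopy injective — here one may even use bounded-below complexes of injectives — resolutions, as recalled in the paragraph before the lemma: since $H^nX=0$ for $n\gg 0$, the complex $X$ is homologically left bounded, so $\t_{\ge q}X$ is bounded, and one can choose compatible injective resolutions $\t_{\ge q}X \to \bfi_q$ with $\bfi_q$ strictly left bounded; then $\Rlim_q \t_{\ge q}X = \lim_q \bfi_q$ computed degreewise in the category of complexes. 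The key point is then a \emph{uniform bound} argument: in each fixed cohomological degree $n$, I claim the inverse system $(H^n \t_{\ge q}X)_q$ and the relevant $\lim^1$ terms stabilize. Using the hypothesis that $\idim H^nX \le b$ for all $n$, one sees — e.g.\ by a hyper-cohomology spectral sequence or by splicing the resolutions — that the map $H^n X \to H^n(\lim_q \bfi_q)$ is an isomorphism, because for $q \le n - b - 1$ the truncation $\t_{\ge q}X$ already "sees" all of $H^n$ and the tail contributions to degree $n$ vanish by the injective-dimension bound; the transition maps are then eventually isomorphisms, so $\lim^1$ vanishes and $\lim$ agrees with $H^nX$. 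The main obstacle, and the step requiring the most care, is precisely this uniformity: without a global bound on the injective dimensions the tail of the inverse system could contribute to arbitrarily low degrees and the map need not be an isomorphism, so the argument must explicitly track how the bound $b$ (together with left boundedness) confines, for each $n$, the relevant contributions to finitely many terms of the system, making $\Rlim$ literally computable as an eventually constant limit in each degree.
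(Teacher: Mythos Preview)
Your treatment of the first triangle is essentially the paper's: the authors simply observe that the telescope sequence
\[
0\lto\coprod_{p\ge 0}\t_{\le p}X\xrightarrow{\,1-\text{shift}\,}\coprod_{p\ge 0}\t_{\le p}X\lto X\lto 0
\]
is already short exact \emph{as a sequence of complexes} (the degreewise colimit is $X$), so it yields the triangle immediately. Your cohomology check amounts to the same thing, though working at the level of complexes is cleaner and avoids having to match the abstract cone with $X$.

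For the second triangle, you take a genuine shortcut. The paper does not invoke a general Milnor triangle in triangulated categories with products; instead it builds, by hand, a K-injective resolution $(I_q)$ of the inverse system $(\t_{\ge q}X)$ in the homotopy category of complexes over $\A^{\bbN^{\op}}$: choose injective resolutions $H^qX\to J_q$, set $I_0$ to be a K-injective resolution of $\t_{\ge 0}X$, and for $q<0$ define $I_q$ as the shifted cone on a chain-level lift of the connecting map $I_{q+1}\to\Si^{q+1}J_q$. One then gets a degreewise split short exact sequence $0\to\lim I_q\to\prod I_q\to\prod I_q\to 0$, hence the triangle, with $\Rlim\t_{\ge q}X\cong\lim I_q$. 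Your abstract approach is legitimate (the paper's remark after the lemma even cites a reference for it), but note that in this context $\Rlim$ is \emph{defined} as the right derived functor of $\lim\colon\A^{\bbN^{\op}}\to\A$, so identifying it with the triangulated-category homotopy limit is itself a statement needing proof; the paper's explicit resolution does both jobs at once.

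The real divergence, and the place where your plan is most at risk, is the final assertion. The paper's argument is not a $\lim^1$/spectral-sequence argument at all: it reruns the recursive construction above, now choosing each $J_q$ of length at most $d$ (the global bound on injective dimension). Because the cone at each step only modifies finitely many degrees and the $J_q$ are uniformly right bounded, the resulting system $(I_q^n)_{q}$ of objects of $\A$ is \emph{literally stationary} for each fixed $n$. Thus $\lim I_q$ is computed degreewise as an eventually constant limit, and one reads off $X\cong\lim I_q\cong\Rlim\t_{\ge q}X$ directly. Your ``splicing the resolutions'' option, if carried out, is exactly this construction; that is the route you should take. By contrast, your alternative via a Milnor $\lim^1$ sequence or a hypercohomology spectral sequence $E_2^{s,t}=\bfR^s\!\lim H^t(\t_{\ge q}X)\Rightarrow H^{s+t}(\Rlim\t_{\ge q}X)$ is hazardous here: a general Grothendieck category need not satisfy AB4$^*$, so products need not be exact, $\bfR^s\!\lim$ over $\bbN$ need not vanish for $s\ge 2$, and the usual two-term Milnor sequence is not available without further hypotheses. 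The uniform injective-dimension bound does not by itself repair this; it is precisely what lets you build a \emph{stationary} resolution and sidestep the issue entirely.
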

\begin{rem} As explained above, the coproducts in the first triangle
may be computed in the category of complexes, whereas the
products in the second triangle are products in the derived category
or equivalently derived products computed using homotopy injective
resolutions. The triangles exist by Proposition A.5 (3) of \cite{KellerNicolas13} 
and its dual but we give a direct proof for the special case at hand below.
\end{rem}
\begin{proof}
  For the first triangle we observe that the colimit of the
  $\t_{\le p}X$ in the category of complexes can be computed
  degreewise.  This gives an exact sequence
\[
\begin{tikzcd}
  0 \arrow{r} &\coprod\limits_{p\ge 0} \t_{\le p}X \arrow{r} &
  \coprod\limits_{p\ge 0} \t_{\le p}X \arrow{r} & X\arrow{r}&0
\end{tikzcd}
\]  
of complexes and therefore an exact triangle in $\bfD(\A)$, as in the
assertion of the lemma.

For the second triangle we need to construct a K-injective (homotopy injective)
resolution of $(\t_{\ge q}X)$ in the category of complexes of inverse
systems. For each $q< 0$, choose an injective
resolution $H^q X \to J_q$. Then choose a K-injective
resolution $\t_{\ge 0}X\to I_0$ and, for $q<0$, recursively define morphisms
$\e_q \colon I_{q+1} \to \Sigma^{q+1}J_q$ such that we have morphisms
of triangles in $\bfD(\A)$
\[
\begin{tikzcd}
  \Sigma^{q} H^{q} X\arrow{r}\arrow{d}&\tau_{\geq q} X\arrow{r}\arrow{d}&
  \tau_{\geq q+1} X \arrow{r}\arrow{d}&\Si^{q+1} H^{q} X\arrow{d}\\
\Si^{q} J_{q}\arrow{r}&I_{q} \arrow{r}& I_{q+1} \arrow{r}{\e_q}&\Si^{q+1} J_q
\end{tikzcd}
\]
where the vertical morphisms are quasi-isomorphisms and $\Si I_q$ is
the cone over a lift to a morphism of complexes of $\e_q$.  The system
$(I_q)$ is then quasi-isomorphic to $(\tau_{\geq q} X)$ and
K-injective in the homotopy category of complexes of inverse systems.
Thus, it may be used to compute the right derived limit of
$(\tau_{\geq q} X)$. We obtain a degreewise split exact sequence of
complexes
\[
\begin{tikzcd}
0\arrow{r}&\lim I_q \arrow{r}
&\prod\limits_{q\le 0} I_q
\arrow{r} & \prod\limits_{q\le 0} I_q \arrow{r} &0
\end{tikzcd}
\]
and therefore an exact triangle in $\bfD(\A)$, as in the assertion of
the lemma, with
\[\Rlim \t_{\ge q}X \cong \Rlim I_q\cong\lim I_q .\]

Now suppose that the injective dimension of $H^qX$ admits a global
bound, say $d$, and we may assume that $H^q X=0$ for all
$q>0$.  To show the isomorphism $X\iso\Rlim\t_{\ge q}X$ we modify the
above construction of a K-injective resolution of
$(\t_{\ge q}X)$ as follows.  For each $q\leq 0$, choose an injective
resolution $H^q X \to J_q$, where the components of $J_q$ vanish in
all degrees strictly greater than $d$. We put
$I_0=J_0$ and, for $q<0$, recursively define morphisms
$\e_q \colon I_{q+1} \to \Sigma^{q+1}J_q$ as before.  Again, the
system $(I_q)$ may be used to compute the right derived limit of
$(\tau_{\geq q} X)$. Since the $J_q$ are uniformly right bounded, the
system $(I_q)$ becomes stationary in each degree. This yields in
$\bfD(\A)$ the required isomorphism
\[
X \cong \lim I_q \cong \Rlim (\tau_{\geq q} X).\qedhere
\]
\end{proof}

\begin{lem}\label{le:Grothendieck-gldim-d}
  Let $(\D^{\le 0},\D^{>0})$ denote the canonical t-structure on
  $\bfD(\A)$ and suppose the global dimension of $\A$ is bounded by
  $d$. Then for $X\in\D^{\ge 0}$ and $Y\in\D^{<-d-2}$ we have
  $\Hom(X,Y)=0$.
\end{lem}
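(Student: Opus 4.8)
The strategy is to reduce the assertion to the bounded case treated in \intref{Lemma}{le:can-t-structure-gldim}, in two steps; each step costs one cohomological degree, which is why the bound weakens from $-d$ to $-d-2$. Throughout, $\D^{\ge 0}$ denotes $\{Z\in\bfD(\A)\mid H^iZ=0 \text{ for all }i<0\}$, and similarly for the other decorations.

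\emph{Step 1: reduce to $X$ bounded.} Rotating the first exact triangle of \intref{Lemma}{le:derived-co-limits} gives an exact triangle
\[
\coprod_{p\ge 0}\t_{\le p}X\lto\coprod_{p\ge 0}\t_{\le p}X\lto X\lto\Si\coprod_{p\ge 0}\t_{\le p}X .
\]
Applying $\Hom(-,Y)$ and using $\Hom\bigl(\coprod_p\t_{\le p}X,Z\bigr)\cong\prod_p\Hom(\t_{\le p}X,Z)$, the associated long exact sequence shows that $\Hom(X,Y)$ vanishes provided $\Hom(\t_{\le p}X,Y)=0$ and $\Hom(\t_{\le p}X,\Si^{-1}Y)=0$ for every $p\ge 0$. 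Since $X\in\D^{\ge 0}$, each $\t_{\le p}X$ lies in $\D^{\ge 0}\cap\D^{\le p}\subseteq\bfD^b(\A)$; moreover $Y\in\D^{<-d-2}\subseteq\D^{<-d-1}$ and $\Si^{-1}Y\in\D^{<-d-1}$. So it is enough to prove the following claim: \emph{if $X'\in\bfD^b(\A)\cap\D^{\ge 0}$ and $Y'\in\D^{<-d-1}$, then $\Hom(X',Y')=0$.}

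\emph{Step 2: reduce to $Y'$ bounded.} Since $\gldim\A\le d$, the injective dimension of every $H^nY'$ is at most $d$, and $H^nY'=0$ for $n\ge -d-1$ (in particular for $n\gg 0$); hence the last assertion of \intref{Lemma}{le:derived-co-limits} applies and yields $Y'\iso\Rlim_{q\le 0}\t_{\ge q}Y'$. Reading the second exact triangle of that lemma through this isomorphism and rotating it produces an exact triangle
\[
\Si^{-1}\prod_{q\le 0}\t_{\ge q}Y'\lto Y'\lto\prod_{q\le 0}\t_{\ge q}Y'\lto\prod_{q\le 0}\t_{\ge q}Y' .
\]
Applying $\Hom(X',-)$, which commutes with products and with $\Si^{-1}$, exactness shows that $\Hom(X',Y')=0$ provided $\Hom(X',\t_{\ge q}Y')=0$ and $\Hom(X',\Si^{-1}\t_{\ge q}Y')=0$ for all $q\le 0$. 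But $\t_{\ge q}Y'$ and $\Si^{-1}\t_{\ge q}Y'$ are bounded complexes: since $Y'\in\D^{<-d-1}$ we get $\t_{\ge q}Y'\in\D^{<-d-1}\subseteq\D^{<-d}$, hence $\Si^{-1}\t_{\ge q}Y'\in\D^{<-d}$. As $X'\in\D^{\ge 0}$, \intref{Lemma}{le:can-t-structure-gldim}, applied to the canonical t-structure on $\bfD^b(\A)$ and using $\gldim\A\le d$, gives both vanishings. This proves the claim, and applying it to $Y$ and to $\Si^{-1}Y$ completes Step 1, so $\Hom(X,Y)=0$.

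The only non-formal input is the isomorphism $Y'\iso\Rlim_{q\le 0}\t_{\ge q}Y'$: this can fail for an arbitrary homologically bounded-above complex and genuinely uses the global bound on injective dimensions coming from $\gldim\A\le d$ (via \intref{Lemma}{le:derived-co-limits}). It is also where the second degree is lost; the first is lost already in Step 1, where controlling the connecting morphism forces us to kill $\Hom(\t_{\le p}X,\Si^{-1}Y)$ and not merely $\Hom(\t_{\le p}X,Y)$.
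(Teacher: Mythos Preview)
Your proof is correct and follows essentially the same approach as the paper's: both use the first triangle of \intref{Lemma}{le:derived-co-limits} to reduce to bounded $X$, the second triangle (together with the isomorphism $Y'\iso\Rlim\t_{\ge q}Y'$, which the paper uses implicitly) to reduce to bounded $Y$, and then invoke \intref{Lemma}{le:can-t-structure-gldim}. Your bookkeeping via the intermediate claim for $Y'\in\D^{<-d-1}$ is a slightly cleaner way of tracking the two lost degrees than the paper's direct enumeration of the three vanishings $\Hom(\t_{\le p}X,\t_{\ge q}Y)$, $\Hom(\Si\t_{\le p}X,\t_{\ge q}Y)$, $\Hom(\Si\t_{\le p}X,\Si^{-1}\t_{\ge q}Y)$, but the content is identical.
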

\begin{proof}
  We apply \intref{Lemma}{le:derived-co-limits}. Thus $X$ fits into an
  exact triangle given by the truncations $\t_{\le p}X$, and it
  suffices to show that $\Hom(\t_{\le p}X, Y)$ and
  $\Hom(\Sigma\t_{\le p}X, Y)$ vanish for all $p$. On the other hand,
  $Y$ fits into an exact triangle given by the truncations
  $\t_{\ge q}Y$, and therefore it suffices to show that
  $\Hom(\t_{\le p}X, \t_{\ge q}Y)$,
  $\Hom(\Sigma \t_{\le p}X, \t_{\ge q}Y)$, and
  $\Hom(\Si \t_{\le p}X, \Si^{-1}\t_{\ge q}Y)$ vanish for all $p$ and $q$.
  This holds by \intref{Lemma}{le:can-t-structure-gldim} since both
  arguments belong to $\bfD^b(\A)$.
\end{proof}

\section*{Tilting for $\bfD(\A)$}

Let $\A$ be a Grothendieck category and $\bfD(\A)$ its unbounded
derived category. Recall that the category $\bfD(\A)$ has arbitrary
(set-indexed) coproducts given by coproducts in the category of
complexes. Notice that the right derived product functor yields
arbitrary products in $\bfD(\A)$. In particular, the product of a
family of left bounded complexes with injective components is also
their product in $\bfD(\A)$.

\begin{lem}\label{lemma:compact}
  If $C$ is a compact object of $\bfD(\A)$, then the cohomology $H^pC$
  vanishes for all but finitely many integers $p$.
\end{lem}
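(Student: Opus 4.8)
The plan is to detect the cohomology of $C$ by testing against shifts of an injective cogenerator of $\A$, exploiting the fact --- recorded in the discussion preceding the lemma --- that one and the same complex can serve simultaneously as a coproduct and as a product in $\bfD(\A)$. Compactness turns this object into a coproduct under $\Hom(C,-)$, whereas every additive functor turns it into a product; and a canonical map from a coproduct to the associated product can only be bijective when all but finitely many of its factors vanish, which is exactly the conclusion we want.

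In detail, I would first fix an injective cogenerator $E$ of the Grothendieck category $\A$. For $n\in\bbZ$ write $\Si^n E$ for the complex with $E$ in degree $-n$ and zero elsewhere; being a bounded complex of injectives it is homotopy injective, so for any $X\in\bfD(\A)$ one has $\Hom_{\bfD(\A)}(X,\Si^n E)=\Hom_{\mathrm K(\A)}(X,\Si^n E)$, and a short diagram chase using exactness of $\Hom_\A(-,E)$ identifies the latter with $\Hom_\A(H^{-n}X,E)$. Since $E$ is a cogenerator, this group vanishes if and only if $H^{-n}X=0$.

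Next consider the complex $W$ having $E$ in every degree and zero differential. On the one hand $W=\coprod_{n\in\bbZ}\Si^n E$, since coproducts in $\bfD(\A)$ are computed in the category of complexes. On the other hand, by the discussion preceding the lemma, the product $\prod_{n\in\bbZ}\Si^n E$ in $\bfD(\A)$ is computed as the complex-level product of homotopy injective resolutions of the $\Si^n E$; as each $\Si^n E$ is already homotopy injective, this product is again $W$ (the degrees involved being pairwise disjoint). The summand inclusions $\iota_n\colon\Si^n E\to W$ and factor projections $\pi_n\colon W\to\Si^n E$ realise $W$ as coproduct and as product, with $\pi_m\iota_n=\delta_{mn}\id$. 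Applying $\Hom_{\bfD(\A)}(C,-)$: compactness of $C$ gives an isomorphism $\coprod_n\Hom(C,\Si^n E)\iso\Hom(C,W)$, while the product description gives $\Hom(C,W)\iso\prod_n\Hom(C,\Si^n E)$, and their composite is the canonical inclusion of the coproduct into the product. Hence this inclusion is bijective, which forces $\Hom(C,\Si^n E)=\Hom_\A(H^{-n}C,E)=0$, and therefore $H^{-n}C=0$, for all but finitely many $n$ --- the assertion.

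The step to be careful about is the double role of $W$ as coproduct and as product in $\bfD(\A)$ with matching structure maps $\pi_m\iota_n=\delta_{mn}\id$; but this is precisely what the computations recorded before the lemma provide (coproducts are complex coproducts, and products are complex products of homotopy injective resolutions), so it is not a real obstacle. The identification $\Hom_{\bfD(\A)}(C,\Si^n E)\cong\Hom_\A(H^{-n}C,E)$ is routine, and the rest is formal. As a cross-check on the upper bound, applying compactness of $C$ to the first triangle of \intref{Lemma}{le:derived-co-limits} shows that $\id_C$ factors through some truncation $\t_{\le p}C$, so $C$ is a retract of $\t_{\le p}C$ and $H^nC=0$ for $n>p$ --- consistent with, and subsumed by, the argument above.
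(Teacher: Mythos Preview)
Your proof is correct and follows essentially the same approach as the paper: both exploit that a complex with injective objects placed in pairwise disjoint degrees is simultaneously the coproduct and the product in $\bfD(\A)$ of its shifted components, then use compactness of $C$ to force all but finitely many of the groups $\Hom_\A(H^pC,I)$ to vanish. The only cosmetic differences are that the paper chooses for each $p$ a monomorphism $H^pC\hookrightarrow I_p$ into an injective and argues by factoring a single map through a finite subcoproduct, whereas you fix one injective cogenerator $E$ once and for all and phrase the conclusion via the canonical map $\coprod\to\prod$ being bijective; the underlying mechanism is identical.
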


\begin{proof} For each $p\in\bbZ$, choose a monomorphism
$i_p\col H^pC \to I_p$ into an injective object. Using the identification
\[
\Hom_{\bfD(\A)}(C, \Sigma^{-p} I) = \Hom_\A(H^pC,I)
\]
valid for each injective $I$ of $\A$, the $i_p$ yield a morphism 
$i$ from $C$ to the product (in the category of complexes and in
the derived category) of the $\Sigma^{-p}I_p$. Clearly, in the category
of complexes (and hence in the derived category), this product is 
canonically isomorphic to the corresponding coproduct. So we obtain
a morphism from $C$ to the coproduct of the $\Sigma^{-p}I^p$ which
in cohomology induces the $i_p$. By the compactness of $C$, this morphism
factors through a finite subcoproduct of the $\Sigma^{-p}I_p$ so that
all but finitely many of the $i_p$ have to vanish. Since they are monomorphisms,
the same holds for the $H^pC$. 
\end{proof}

Now let $T$ be a \emph{tilting object} of $\bfD(\A)$. Thus $T$ is
compact, the group $\Hom(T,\Si^p T)$ vanishes for all $p\neq 0$, and
$\bfD(\A)$ equals its localizing subcategory generated by $T$.

Let $\La$ be the endomorphism ring of $T$. Then $\La$ is
quasi-isomorphic to the derived endomorphism algebra $\RHom(T,T)$ and
so the functor $\RHom(T,-)$ yields a triangle equivalence
\[
\bfD(\A)\longiso\bfD(\Mod \La),
\]
cf.~\cite{Ke1994}. We use it to identify $\bfD(\A)$ with $\bfD(\Mod\La)$.
The canonical t-structure on $\bfD(\A)$ is denoted by
$(\D^{\le 0},\D^{>0})$, while the canonical t-structure on
$\bfD(\Mod\La)$ is denoted by $(\D(\La)^{\le 0},\D(\La)^{>0})$.

\begin{lem}\label{le:finite-gldim}
  Suppose that $\A$ and $\Mod\La$ have finite global dimension. Then
the functor  $\RHom(T,-)$ restricts to an equivalence
  $\bfD^b(\A)\iso\bfD^b(\Mod \La)$.
\end{lem}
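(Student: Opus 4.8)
The statement to prove is Lemma~\ref{le:finite-gldim}: under the standing hypothesis that $T\in\bfD(\A)$ is tilting, if both $\A$ and $\Mod\La$ have finite global dimension, then the equivalence $\RHom(T,-)\col\bfD(\A)\iso\bfD(\Mod\La)$ carries $\bfD^b(\A)$ onto $\bfD^b(\Mod\La)$. Since the functor is an equivalence of triangulated categories, it suffices to show two inclusions: the image of $\bfD^b(\A)$ lands in $\bfD^b(\Mod\La)$, and (applying the same argument to a quasi-inverse, or more symmetrically) the preimage of $\bfD^b(\Mod\La)$ lands in $\bfD^b(\A)$. Both will come from the same mechanism, namely the comparison of the two canonical t-structures via Lemmas~\ref{le:can-t-structure-gldim}, \ref{le:derived-co-limits} and \ref{le:Grothendieck-gldim-d}.

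**Key steps.** First I would record that an object $X$ of $\bfD(\A)$ lies in $\bfD^b(\A)$ if and only if it lies in $\D^{\le n}\cap\D^{\ge -n}$ for some $n$, i.e.\ is bounded on both sides, and likewise for $\bfD^b(\Mod\La)$. Thus the task is to see that a two-sided bound on the cohomology of $X$ with respect to one t-structure forces a two-sided bound with respect to the other. Next, the crucial observation: since $T$ is compact, it lies in $\bfD^b(\A)$ by Lemma~\ref{lemma:compact}; hence there is an integer $a$ with $T\in\D^{\le a}\cap\D^{\ge -a}$. For a module $M$ in $\Mod\La$, sitting in degree $0$ of $\bfD(\Mod\La)$, it is a subquotient (under the equivalence) of a complex built from $T$ by coproducts, shifts and extensions; more directly, $M$ corresponds to an object of $\bfD(\A)$ which by compactness of $T$ one checks has cohomology concentrated in the interval $[-a,a]$. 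So the canonical heart of $\bfD(\Mod\La)$ is carried into $\D^{\le a}\cap\D^{\ge -a}\subseteq\bfD^b(\A)$. Conversely, an injective object $I$ of $\A$ sitting in degree $0$ has, under $\RHom(T,-)$, cohomology $H^p\RHom(T,I)=\Ext^p(T,I)=\Hom(T,\Si^p I)$, which vanishes for $|p|>a$ again because $T\in\D^{\le a}\cap\D^{\ge-a}$ and $I$ is injective; so the canonical heart of $\bfD(\A)$ maps into a bounded band of $\bfD(\Mod\La)$ — here one uses that $\A$ has finite global dimension so that the heart of $\bfD^b(\A)$ is ``reachable'' by finite injective resolutions, and that $\Mod\La$ has finite global dimension so the analogous statement holds on the other side.

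Putting these together: take $X\in\bfD^b(\A)$, with cohomology in degrees $[m,M]$. The finite filtration of $X$ by truncations $\t_{\ge i}X$ has subquotients $\Si^{-i}H^iX$, $m\le i\le M$; each $H^iX$ has a finite injective resolution of length $\le d=\gldim\A$, so $\Si^{-i}H^iX$ lies in the thick subcategory generated by $\{\Si^{-i-j}I : I\text{ injective},\,0\le j\le d\}$. Applying $\RHom(T,-)$ and using the previous paragraph, $\RHom(T,\Si^{-i-j}I)$ has $\La$-cohomology concentrated in $[-i-j-a,\,-i-j+a]$, uniformly bounded since $i,j$ range over a finite set. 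Hence $\RHom(T,X)$, being a finite iterated extension of shifts of such objects, has bounded $\La$-cohomology, i.e.\ lies in $\bfD^b(\Mod\La)$. The symmetric argument, using $\gldim\Mod\La<\infty$ and compactness of $T$, shows the quasi-inverse sends $\bfD^b(\Mod\La)$ into $\bfD^b(\A)$; the two inclusions give the claimed restricted equivalence.

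**Main obstacle.** The delicate point is the interaction with the \emph{unbounded} side: a priori $\RHom(T,-)$ does not obviously preserve boundedness because $\A$-cohomology bounded in finitely many degrees could spread out under the equivalence. This is precisely where finite global dimension of \emph{both} categories is indispensable — it replaces a potentially infinite injective (or projective) resolution by a finite one, converting ``cohomology in one degree'' into ``cohomology in a bounded band'' under the functor, with a bound independent of the degree. I expect the bookkeeping of these uniform bounds — showing that the band stays of bounded width as one runs over the finitely many cohomological degrees of $X$ — to be the part requiring care, though it is ultimately a finite induction using the octahedral axiom on the truncation triangles together with Lemma~\ref{le:can-t-structure-gldim}.
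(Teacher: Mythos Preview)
Your argument is correct in substance, and the forward direction (that $\RHom(T,-)$ sends $\bfD^b(\A)$ into $\bfD^b(\Mod\La)$) is essentially the paper's argument, just phrased through injective resolutions rather than directly as the vanishing of $\Hom(T,\Si^iX)$ for $|i|\gg 0$. The paper's version is a bit cleaner: it simply observes that for $X,Y\in\bfD^b(\A)$ one has $\Hom(X,\Si^iY)=0$ for almost all $i$ by an induction on the number of nonzero cohomology degrees, and applies this with $X=T$.

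Where you genuinely diverge is in the backward direction. You argue that the quasi-inverse $-\Lotimes_\La T$ sends $\bfD^b(\Mod\La)$ into $\bfD^b(\A)$ by filtering via truncations, replacing each $\La$-module by a finite projective resolution, and using that arbitrary coproducts of $T$ stay cohomologically bounded. The paper instead shows essential surjectivity directly: since $\gldim\La<\infty$, the category $\bfD^b(\Mod\La)$ is the thick subcategory generated by $\Proj\La$, and the restricted functor already hits $\Proj\La$ because it identifies the closure of $T$ under coproducts and summands with $\Proj\La$. Both work; the paper's route is shorter and avoids tracking the quasi-inverse explicitly.

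Two small points to clean up. First, your ``Key steps'' paragraph asserts that a $\La$-module $M$ corresponds under the equivalence to an object with $\A$-cohomology in $[-a,a]$ ``by compactness of $T$''; this is not right as stated---the correct bound also involves $\gldim\La$ (or $\gldim\A$), and compactness alone does not give it. Your final paragraph silently repairs this by invoking the finite projective resolution, so the proof stands, but the earlier sentence is misleading. Second, you announce that Lemmas~\ref{le:derived-co-limits} and~\ref{le:Grothendieck-gldim-d} will be used, but your actual argument never needs them: everything happens inside $\bfD^b$, and only Lemma~\ref{lemma:compact} (to get $T\in\bfD^b(\A)$) and the finite-global-dimension hypotheses are invoked.
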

\begin{proof}
  Given objects $X,Y\in\bfD^b(\A)$ we have $\Hom(X,\Si^iY)=0$ for
  almost all $i$ since $\A$ has finite global dimension. This is
  easily shown by induction on the number of integers $n$ such that
  $H^n(X\oplus Y)\neq 0$. It follows that  $\RHom(T,-)$ restricts to
a functor   $F\colon \bfD^b(\A)\to\bfD^b(\Mod \La)$, since
\[H^i \RHom(T,X)\cong\Hom(T,\Si^iX)\] and $T\in\bfD^b(\A)$ by
\intref{Lemma}{lemma:compact}.  On the other hand, $\bfD^b(\Mod\La)$
equals the thick subcategory of $\bfD(\Mod\La)$ that is generated by
the category $\Proj\La$ of projective $\La$-modules, viewed as
complexes concentrated in degree zero, since $\La$ has finite global
dimension. It follows that $F$ is essentially surjective since $F$
identifies the closure of $T$ under arbitrary coproducts and direct summands
with $\Proj\La$.
\end{proof}

From now on suppose that the global dimension of $\A$ is bounded by $d$,
and fix $t\ge 0$ such that $H^pT=0$ for all $p\not\in[-t,0]$,
cf.~\intref{Lemma}{lemma:compact}.

\begin{lem}\label{le:tilt-t-structure-1}
  We have $\D(\La)^{\le 0}\subseteq \D^{\le 0}$.
\end{lem}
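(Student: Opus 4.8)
The assertion $\D(\La)^{\le 0}\subseteq\D^{\le 0}$ says that if a complex $Y$ (viewed in $\bfD(\A)$ via the equivalence $\RHom(T,-)$) has $\La$-module cohomology concentrated in degrees $\le 0$, then its $\A$-cohomology is also concentrated in degrees $\le 0$. Unwinding the equivalence, $Y\in\D(\La)^{\le 0}$ means $\Hom(T,\Si^iY)=H^i\RHom(T,Y)=0$ for all $i>0$. So I must show: $\Hom(T,\Si^i Y)=0$ for all $i>0$ forces $H^j Y=0$ for all $j>0$, i.e.\ $Y\in\D^{\le 0}$.

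\textbf{Key steps.} First I would translate the hypothesis into the language of the canonical t-structure on $\bfD(\A)$: $\Hom(T,\Si^iY)=0$ for $i>0$ says $Y\in T^{\perp}_{>0}:=\{Y\mid\Hom(\Si^{-i}T,Y)=0,\ \forall i>0\}$, equivalently $\Hom(\Si^{-1}T,\t_{>0}Y)$ controls things — more precisely, I want to show $\t_{>0}Y=0$. Suppose for contradiction $\t_{>0}Y\neq 0$; let $j>0$ be the least integer with $H^jY\neq 0$, so $\t_{\ge j}Y\neq 0$ and $\t_{\ge j}Y\in\D^{\ge j}$. The second step is to probe $\t_{\ge j}Y$ with $T$: since $T$ is a compact generator and $T\in\bfD^b(\A)$ with $H^pT=0$ for $p\notin[-t,0]$, there is a nonzero map from some shift of $T$ into $\t_{\ge j}Y$. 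Concretely, $\t_{\ge j}Y\neq 0$ in the localizing subcategory generated by $T$ means $\Hom(\Si^k T,\t_{\ge j}Y)\neq 0$ for some $k\in\bbZ$. The third step is the range restriction: because $H^p T$ lives in $[-t,0]$ and $\t_{\ge j}Y$ lives in degrees $\ge j$, any nonzero $\Hom(\Si^k T,\t_{\ge j}Y)$ forces $k$ into a bounded range; and by finiteness of global dimension of $\A$ (Lemma~\ref{le:can-t-structure-gldim} and Lemma~\ref{le:Grothendieck-gldim-d}) one shows $k<0$ is impossible once $j>0$, because $\Si^k T\in\D^{\ge -k}\supseteq\D^{\ge\text{(something}>0)}$ while... — here I need to be careful. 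The cleaner route: use the triangle $\t_{\le 0}Y\to Y\to\t_{>0}Y\to$, apply $\Hom(\Si^iT,-)$, and show $\Hom(\Si^iT,\t_{\le 0}Y)=0$ for $i\le 0$ is \emph{not} what I want; instead, since $T\in\D^{\le 0}$ (as $H^pT=0$ for $p>0$), we have $\Hom(\Si^iT,Z)$ can be nonzero for $Z\in\D^{\le 0}$ only when... Let me restructure: I will show $\Hom(\Si^iT,\t_{>0}Y)=0$ for all $i\le 0$ using that $T\in\D^{\le 0}$ and $\t_{>0}Y\in\D^{>0}$, but this needs $i\le 0$ to land in the orthogonality range — and in fact for $i$ sufficiently negative, $\Si^iT\in\D^{\le -i}$ with $-i$ large, so orthogonality to $\D^{>0}$ fails in general unless global dimension is finite. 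So the fourth step invokes Lemma~\ref{le:Grothendieck-gldim-d}: for $T\in\D^{\le 0}\subseteq\D^{\ge -t}$ (cohomology in $[-t,0]$) and $\t_{>j-1}Y\in\D^{<-d-2}$... no, the degrees go the wrong way.

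\textbf{The right argument.} I think the correct approach is: assume $Y\notin\D^{\le 0}$ and derive a nonzero Hom \emph{into} a positive-degree shift of $T$, contradicting compactness-generation. Since $\bfD(\A)=\Loc(T)$, if $H^jY\neq 0$ for some $j>0$ then picking the largest such $j$ (using that $Y$ need not be bounded — but here one can truncate: $H^j(\t_{\le j}Y)=H^jY\neq0$) gives a nonzero object $\t_{\le j}Y$ with top cohomology in degree $j>0$; there must then exist $k$ with $\Hom(\Si^k T,\t_{\le j}Y)\neq 0$. Writing the triangle $\t_{\le j}Y\to Y\to\t_{>j}Y\to\Si\t_{\le j}Y$ and using $\Hom(\Si^kT,\Si^m Y)=0$ for $m>0$, one transports this to a nonzero class in $\Hom(\Si^kT,\t_{>j}Y)$ or $\Hom(\Si^{k+1}T,\t_{\ge j+1}Y)$; iterating upward is blocked because $\Hom(\Si^\bullet T,\Si^{>0}Y)=0$, so the class cannot escape to infinity — it must come from $\Hom(\Si^kT,Y)=\Hom(\Si^kT,\Si^0Y)$, forcing $k\le 0$; but for $k\le 0$, $\Si^kT\in\D^{\ge -t}$... and $\t_{\le j}Y$ with $j>0$: the nonvanishing $\Hom(\Si^kT,\t_{\le j}Y)$ with $T\in\D^{\le0}$ and $\t_{\le j}Y\in\D^{\le j}$, $k\le 0$ means $\Si^kT\in\D^{\le -k}$; compatibility of the canonical t-structure gives no contradiction directly, so I finally use Lemma~\ref{le:Grothendieck-gldim-d} in the form: $\Si^kT\in\D^{\ge -k-t}$ (as $H^pT\in[-t,0]$) and if $-k-t\ge 0$ i.e.\ $k\le -t$ then $\Si^kT\in\D^{\ge0}$, while $\t_{\ge1}Y\in\D^{<-d-2}$ is false — wait, degrees. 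I believe the honest statement is that one shows $k$ ranges in a finite set by the bounded-cohomology/finite-gldim combination, handles each by the orthogonality lemmas, and concludes $\t_{>0}Y=0$.

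\textbf{Main obstacle.} The crux is controlling the \emph{unboundedness} of $Y$: a general $Y\in\D(\La)^{\le 0}$ need not be bounded below, so one cannot simply induct on the number of nonzero cohomologies. Here Lemma~\ref{le:derived-co-limits} is essential — it lets me write $Y$ (or rather $\t_{>0}Y$, which we want to kill) as a derived (co)limit of its bounded truncations, reducing the vanishing $\Hom(T,\t_{>0}Y)$-style statements to the bounded case where Lemma~\ref{le:can-t-structure-gldim} applies. So the real work is: reduce to bounded truncations via Lemma~\ref{le:derived-co-limits}, apply compact-generation by $T$ together with the cohomological bound $H^pT\in[-t,0]$ and Lemma~\ref{le:Grothendieck-gldim-d} to see that a nonzero positive-degree cohomology of $Y$ would produce a nonzero map $\Si^kT\to Y$ with $k>0$, contradicting $Y\in\D(\La)^{\le0}$. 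I expect assembling the triangles and tracking the degree shifts carefully — making sure the finite-global-dimension bound $2d+t$-type estimate enters correctly — to be the delicate part.
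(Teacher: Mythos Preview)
Your proposal never arrives at a proof: it is a sequence of restarts, each abandoned when the degree bookkeeping fails to produce a contradiction. The reason is that you are attacking the implication in the hard direction. You fix $Y$ with $\Hom(T,\Si^iY)=0$ for $i>0$ and try to force $\t_{>0}Y=0$ by probing with shifts of $T$; but knowing only that positive shifts of $T$ map trivially to $Y$ gives no direct grip on $\t_{>0}Y$, and this is why you find yourself reaching for compact generation, finite global dimension, and derived (co)limits. None of that machinery is needed here --- it is reserved for \intref{Lemma}{le:tilt-t-structure-2}, where the degrees genuinely go the other way.

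The missing idea is to prove the \emph{complementary} inclusion $\D^{>0}\subseteq\D(\La)^{>0}$ and then pass to left orthogonals. This is a one-line computation: if $X\in\D^{>0}$ and $i\le 0$, then $\Si^iX\in\D^{>0}$ as well, and since $T\in\D^{\le 0}$ (because $H^pT=0$ for $p>0$) the t-structure axiom gives $\Hom(T,\Si^iX)=0$. Under the equivalence $\RHom(T,-)$ this says $H^i_\La X=0$ for all $i\le 0$, i.e.\ $X\in\D(\La)^{>0}$. Now use that for any t-structure $(\U,\V)$ one has $\U={}^{\perp}\V$; the inclusion $\D^{>0}\subseteq\D(\La)^{>0}$ therefore yields $\D(\La)^{\le 0}={}^{\perp}\D(\La)^{>0}\subseteq{}^{\perp}\D^{>0}=\D^{\le 0}$. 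No hypothesis on the global dimension of $\A$, no boundedness of $Y$, and no appeal to \intref{Lemmas}{le:derived-co-limits} or \ref{le:Grothendieck-gldim-d} is required.
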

\begin{proof}
  For $X\in\D^{>0}$ and $i\le 0$ we have $\Hom(T,\Si^iX)=0$ since
  $T\in\D^{\le 0}$. It follows that $X\in\D(\La)^{>0}$, since
  $\bfD(\A)\iso\bfD(\Mod\La)$ identifies $T$ with $\La$ and
  $H^iX\cong\Hom(\La, \Si^iX)$ in $\bfD(\Mod\La)$. Thus
  $\D(\La)^{\le 0}\subseteq \D^{\le 0}$.
\end{proof}

\begin{lem}\label{le:tilt-t-structure-2}
  We have $\D(\La)^{\ge 0}\subseteq \D^{\ge -d-t-2}$.
\end{lem}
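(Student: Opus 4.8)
The plan is to prove the ``dual'' inclusion $\D^{\le -d-t-3}\subseteq\D(\La)^{\le -1}$ and then pass to right orthogonal complements. For the canonical $t$-structure on $\bfD(\A)$ one has $\D^{\ge n}=(\D^{\le n-1})^{\perp}$ for every $n$, where $(-)^{\perp}$ denotes the right orthogonal subcategory, and likewise for $\bfD(\Mod\La)$; in particular $\D(\La)^{\ge 0}=(\D(\La)^{\le -1})^{\perp}$ and $\D^{\ge -d-t-2}=(\D^{\le -d-t-3})^{\perp}$. Since $(-)^{\perp}$ reverses inclusions, the inclusion $\D^{\le -d-t-3}\subseteq\D(\La)^{\le -1}$ would immediately yield $\D(\La)^{\ge 0}\subseteq\D^{\ge -d-t-2}$, which is the assertion.

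So I would fix an object $W\in\D^{\le -d-t-3}$ and show that $W$ lies in $\D(\La)^{\le -1}$. Under the identification $\bfD(\A)\iso\bfD(\Mod\La)$ carrying $T$ to $\La$, this membership is equivalent to the vanishing of $\Hom(T,\Si^iW)$ for all $i\ge 0$, exactly as in the proof of \intref{Lemma}{le:tilt-t-structure-1}. To verify it I would move $T$ into the right aisle: since $H^pT=0$ for all $p<-t$, the object $\Si^{-t}T$ lies in $\D^{\ge 0}$. On the other side, for $i\ge 0$ the complex $\Si^{i-t}W$ has cohomology concentrated in degrees $\le -d-3-i\le -d-3$, so it lies in $\D^{<-d-2}$. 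Hence \intref{Lemma}{le:Grothendieck-gldim-d} applies and gives
\[
\Hom(T,\Si^iW)=\Hom(\Si^{-t}T,\Si^{i-t}W)=0 ,
\]
as wanted.

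The one point requiring care is the index bookkeeping, namely that the hypothesis $W\in\D^{\le -d-t-3}$ is exactly what makes $\Si^{-t}W$ land in $\D^{<-d-2}$: the shift by $-t$ absorbs the width of the cohomology of $T$, while the $d+2$ is inherited from \intref{Lemma}{le:Grothendieck-gldim-d}, and together they produce the stated bound $d+t+2$. I do not expect any genuine obstacle beyond this; all the substance is already contained in \intref{Lemma}{le:Grothendieck-gldim-d}, and the present statement is just the formal counterpart of \intref{Lemma}{le:tilt-t-structure-1} with the roles of the two canonical $t$-structures interchanged.
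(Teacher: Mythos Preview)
Your proof is correct and follows essentially the same route as the paper: both arguments show an inclusion of aisles by checking that $\Hom(T,\Si^i\,-)$ vanishes in the appropriate range via \intref{Lemma}{le:Grothendieck-gldim-d} (after shifting $T$ by $t$ into $\D^{\ge 0}$), and then pass to right orthogonals. The only cosmetic difference is that the paper states the aisle inclusion as $\D^{\le 0}\subseteq\D(\La)^{\le d+t+2}$ rather than your shifted version $\D^{\le -d-t-3}\subseteq\D(\La)^{\le -1}$.
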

\begin{proof}
  Let $X\in\D^{\le 0}$. Then $H^iT=0$ for all $i\not\in[-t,0]$ implies
  $\Hom(T,\Si^iX)=0$ for all $i>d+t+2$ by \intref{Lemma}{le:Grothendieck-gldim-d}. It
  follows that $\D^{\le 0}\subseteq\D(\La)^{\le d+t+2}$, and therefore
  $\D(\La)^{\ge 0}\subseteq \D^{\ge -d-t-2}$.
\end{proof}

\begin{proof}[Proof of \intref{Theorem}{th:main2}]
  Let $X,Y\in\Mod\La$ and $i>2d+t+4$. Then
  \[X\in \D(\La)^{\ge 0}\subseteq \D^{\ge
      -d-t-2}\qquad\textrm{and}\qquad \Si^iY\in\D(\La)^{<
      -2d-t-4}\subseteq \D^{< -2d-t-4}\] by
  \intref{Lemmas}{le:tilt-t-structure-1} and
  \ref{le:tilt-t-structure-2}. It follows from
  \intref{Lemma}{le:Grothendieck-gldim-d}
  that \[\Ext^{i}(X,Y)=\Hom(X,\Si^iY)=0.\] Thus the global dimension
  of $\La$ is bounded by $2d+t+4$.  In order to improve this bound,
  observe that $\RHom(T,-)$ restricts to an equivalence
  $\bfD^b(\A)\iso\bfD^b(\Mod \La)$ by
  \intref{Lemma}{le:finite-gldim}. Then we compare t-structures on
  $\bfD^b(\A)$ and use \intref{Lemma}{le:can-t-structure-gldim}
  instead of \intref{Lemma}{le:Grothendieck-gldim-d}. It follows that
  the global dimension of $\La$ is bounded by $2d+t$.
\end{proof}
  
\section*{Tilting for $\bfD^b(\A)$}  

Let $\A$ be an abelian category and $T\in\bfD^b(\A)$ a tilting object;
recall this means $\Hom(T,\Si^i T)=0$ for all $i\neq 0$ and
$\bfD^b(\A)$ equals the thick subcategory generated by $T$. Set
$\La=\End(T)$ and denote by $\proj\La$ the category of finitely
generated projective $\La$-modules. By Theorem~3.2 of
\cite{KV1987}, the inclusion $\add T \hookrightarrow\bfD^b(\A)$
extends to a triangle functor $\bfK^b(\add T) \rightarrow \bfD^b(\A)$.
Then it is straightforward to show
that the composite $\proj\La\iso\add T\hookrightarrow\bfD^b(\A)$
extends to a triangle equivalence
\[\bfD^b(\proj\La)\longiso\bfK^b(\add T)\longiso\bfD^b(\A).\]

We deduce \intref{Theorem}{th:main} from \intref{Theorem}{th:main2} when
$\A$ is \emph{noetherian}, that is, each object in $\A$ is
noetherian. To this end, we fix an essentially small abelian category $\A$
and let $\bar\A:=\Lex(\A^\op,\Ab)$ denote the category of left exact
functors $\A^\op\to\Ab$. Then $\bar\A$ is a Grothendieck category and
the Yoneda embedding $\A\to\bar\A$ which sends $X\in\A$ to $\Hom(-,X)$
is fully faithful and exact, cf.~\cite[Chap.~II]{Ga1962}.

\begin{lem}\label{le:locally-noeth}
  Suppose that $\A$ is noetherian and of finite global dimension. Then
  $\bfD(\bar\A)$ is compactly generated (so equals the localizing
  subcategory generated by all compact objects) and the inclusion
  $\A\to\bar\A$ induces a fully faithful functor
  $\bfD^b(\A)\to\bfD(\bar\A)$ that identifies $\bfD^b(\A)$ with the
  full subcategory of compact objects.
\end{lem}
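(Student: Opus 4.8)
The plan is to work with the category of ind-objects. By Gabriel's theorem \cite[Chap.~II]{Ga1962}, since $\A$ is essentially small, abelian and noetherian, the Grothendieck category $\bar\A=\Lex(\A^\op,\Ab)$ is locally noetherian and the Yoneda embedding identifies $\A$ with the full subcategory of noetherian objects of $\bar\A$; in particular $\A$ is a Serre subcategory of $\bar\A$, closed under subobjects and quotients. As $\A\hookrightarrow\bar\A$ is exact it induces, without any resolutions, a triangle functor $\bfD^b(\A)\to\bfD(\bar\A)$, and I would establish: (i) this functor is fully faithful; (ii) $\bfD(\bar\A)$ is generated as a localizing subcategory by the objects of $\A$, each of which is compact; (iii) hence $\bfD(\bar\A)$ is compactly generated and the functor of (i) identifies $\bfD^b(\A)$ with its subcategory of compact objects. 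Granting (i) and (ii), part (iii) is formal: the compact objects of a compactly generated triangulated category form the thick subcategory generated by any set of compact generators, so here they form the thick subcategory generated by $\A$; by (i) the essential image of $\bfD^b(\A)$ is the triangulated subcategory generated by $\A$, and since $\bfD^b(\A)$ is idempotent complete (as is the bounded derived category of any abelian category), this image is closed under direct summands, hence coincides with that thick subcategory.

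For (i) a standard dévissage---both arguments of a morphism in $\bfD^b(\A)$ are finite iterated extensions of shifts of objects of $\A$, so one runs the five-lemma argument over the triangulated subcategory generated by $\A$---reduces the claim to bijectivity of $\Ext^n_\A(A,B)\to\Ext^n_{\bar\A}(A,B)$ for $A,B\in\A$ and all $n\ge 0$. The cases $n=0,1$ follow at once from full faithfulness and exactness of the Yoneda embedding together with the fact that an extension of noetherian objects is noetherian, hence lies in $\A$. For $n\ge 2$ I would argue by induction on $n$. The key step is a refinement lemma: any Yoneda $n$-extension $0\to B\to E_{n-1}\to\cdots\to E_0\to A\to 0$ in $\bar\A$ with $A,B\in\A$ is equivalent to one all of whose terms are noetherian. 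One builds this from right to left, using repeatedly that in a locally noetherian category every epimorphism onto a noetherian object restricts to an epimorphism from a noetherian subobject of the source: replace $E_0$ by such a subobject $F_0$ and $E_1$ by the pullback $E_1\times_{\ker(E_0\to A)}\ker(F_0\to A)$, then iterate; after $E_0,\dots,E_{n-2}$ have been made noetherian, $E_{n-1}$ is an extension of a noetherian object by $B$ and so noetherian. This gives surjectivity; injectivity follows by a dimension shift, comparing the connecting maps in the long exact $\Ext$-sequences over $\A$ and over $\bar\A$ attached to a short exact sequence $0\to C\to F_0\to A\to 0$ in $\A$, using the inductive hypothesis in degree $n-1$. (Note that finiteness of the global dimension is not used for (i).)

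For (ii), that $\A$ generates $\bfD(\bar\A)$ as a localizing subcategory is routine: every object of $\bar\A$ is the filtered colimit of its noetherian subobjects, and filtered colimits in a Grothendieck category are exact and so compute the corresponding homotopy colimits in $\bfD(\bar\A)$, while every complex is built from its cohomology objects by truncations and homotopy colimits (cf.~\intref{Lemma}{le:derived-co-limits}). Compactness of the objects of $\A$ is where both hypotheses enter. First, finiteness of $\gldim\A$ passes to $\bar\A$: for a noetherian object $N$ the functors $\Ext^i_{\bar\A}(N,-)$ commute with filtered colimits (a standard feature of locally noetherian Grothendieck categories, generalizing the case of finitely generated modules over a noetherian ring), and injective dimension in $\bar\A$ is detected by $\Ext$-vanishing against noetherian objects; combined with (i) and $\gldim\A\le d$ this yields $\Ext^{d+1}_{\bar\A}(-,-)=0$. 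Second, in any Grothendieck category of finite global dimension every complex of injectives is homotopy injective---indeed in such a category every cocycle of an acyclic complex of injectives is itself injective, so the complex is contractible---whereas in a locally noetherian category arbitrary coproducts of injectives are injective. Therefore, for $Y\in\A$ and any family $(X_\lambda)$ in $\bfD(\bar\A)$, choosing quasi-isomorphisms $X_\lambda\to I_\lambda$ with $I_\lambda$ a complex of injectives, the complex $\coprod_\lambda I_\lambda$ has injective components, is homotopy injective, and represents $\coprod_\lambda X_\lambda$; since $Y$ is finitely presented $\Hom_{\bar\A}(Y,-)$ commutes with the degreewise coproducts, so $\Hom_{\bfD(\bar\A)}(Y,\coprod_\lambda X_\lambda)\cong\coprod_\lambda\Hom_{\bfD(\bar\A)}(Y,X_\lambda)$, and $Y$ is compact.

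The main obstacle is precisely the compactness statement in (ii): it is here that noetherianity and finiteness of the global dimension must be used together---the former to make coproducts of injectives injective and to descend $\Ext$-vanishing from $\A$ to $\bar\A$, the latter to make unbounded complexes of injectives homotopy injective. Without the hypothesis of finite global dimension the objects of $\A$ need not be compact: already for $\A=\coh X$ with $X$ a singular noetherian scheme, $\bfD(\Qcoh X)$ is compactly generated but its compact objects are only the perfect complexes, a proper subcategory of $\bfD^b(\coh X)$.
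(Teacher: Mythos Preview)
Your proof is correct and, at the level of the underlying mathematics, follows the same route as the paper: both arguments hinge on the two facts that (a) the global dimension of $\bar\A$ equals that of $\A$, and (b) in a Grothendieck category of finite global dimension every complex of injectives is K-injective, so that the homotopy category $\bfK(\Inj\bar\A)$ is equivalent to $\bfD(\bar\A)$. The paper's proof simply cites these facts together with the compact generation of $\bfK(\Inj\bar\A)$ from \cite[Propositions~2.3 and~3.6]{Kr2005}, whereas you unpack them into a direct argument for compactness of each $Y\in\A$ (using that coproducts of injectives are injective in a locally noetherian category and that $\Hom_{\bar\A}(Y,-)$ preserves coproducts) and for full faithfulness (via refining Yoneda extensions to noetherian ones). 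What your approach buys is self-containment and a clear view of exactly where each hypothesis enters; what the paper's buys is brevity and a conceptual link to the homotopy category of injectives. One small comment: your dimension-shift argument for injectivity of the $\Ext$-comparison in (i) is a bit terse---without projectives in $\A$ or a carefully chosen $F_0$ the five-lemma does not immediately apply---but the result is standard and can also be obtained by applying the same refinement lemma to the zigzags witnessing equivalence of Yoneda extensions.
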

\begin{proof}
  The inclusion $\A\to\bar\A$ identifies $\A$ with the full
  subcategory of noetherian objects in $\bar\A$.  It is well-known
  that an object $I$ of $\bar\A$ is injective if and only if
  $\Ext^1(-,I)$ vanishes on all noetherian objects. This implies that the
  global dimension of $\bar\A$ equals that of $\A$.
  
  Let $\Inj\bar\A$ denote the full subcategory of injective objects
  and $\bfK(\Inj\bar\A)$ the category of complexes up to
  homotopy. Then the canonical functor
  $\bfK(\Inj\bar\A)\to\bfD(\bar\A)$ is an equivalence,
  cf.~\cite[Proposition~3.6]{Kr2005}. It follows that $\bfD(\bar\A)$
  is compactly generated and that $\bfD^b(\A)$ identifies with the
  full subcategory of compact objects,
  cf.~\cite[Proposition~2.3]{Kr2005}.
\end{proof}

\begin{proof}[Proof of \intref{Theorem}{th:main}]
  We apply \intref{Lemma}{le:locally-noeth}. The functor
  $\bfD^b(\A)\to\bfD(\bar\A)$ identifies a tilting object $T$ of
  $\bfD^b(\A)$ with a tilting object of $\bfD(\bar\A)$. Let
  $\La=\End(T)$.  Then \intref{Theorem}{th:main2} provides the bound
  for the global dimension of $\La$. When $\La$ is right coherent,
  then the triangle equivalence $\bfD(\bar\A)\iso\bfD(\Mod \La)$
  restricts to an equivalence
  \[\bfD^b(\A)\iso\bfD^b(\proj \La)\iso\bfD^b(\mod \La)\]
  on the full subcategory of compact objects 
\end{proof}

\section*{Concluding remarks}

We end this paper with some remarks. Let us fix an essentially
small abelian category $\A$  with a tilting object $T\in\bfD^b(\A)$,
and set  $\La=\End(T)$.

Recall that a $\La$-module $X$ is \emph{pseudo-coherent} if it admits
a projective resolution
  \[\cdots \lto P_1\lto P_0\lto X\lto 0\] such that each $P_i$ is
  finitely generated. We denote by $\pcoh\La$ the full subcategory of
  pseudo-coherent $\La$-modules; it is a thick subcategory of the
  category of all $\La$-modules, so closed under direct summands,
  extensions, kernels of epis, and cokernels of monos.

\begin{rem}  
  Suppose that $\A$ is noetherian and of finite global dimension. Then
  $\RHom(T,-)$ induces a triangle equivalence
  $\bfD^b(\A)\iso \bfD^b(\pcoh\La)$.
\end{rem}

For each pair of objects $X,X'\in\A$ we have $\Ext^i(X,X')=0$ for
$i\gg 0$. This provides some restriction on the global dimension of
$\A$.

\begin{rem}
  Let $\A$ be a \emph{length category}; thus each object has finite
  composition length. Then
\[\gldim\A=\inf_{\substack{S,S'\\ \textrm{simple}}}\{i\in\bbN\mid
  \Ext^{i+1}(S,S')=0\}<\infty\]
since the number of isoclasses of simple objects is bounded by the
length of $H^*T$.
\end{rem}

\begin{rem}
  The global dimension of $\A$ need not to be finite when $\bfD^b(\A)$
admits a tilting object. Let $\La$ be a right noetherian ring and set
$\A=\mod\La$. Then $\La\in\bfD^b(\A)$ is tilting if and only if each
object in $\A$ has finite projective dimension. In this case the
global dimension of $\A$ equals the (small) finitistic dimension of
$\La$, which may be infinite (even when $\La$ is commutative),
cf.~\cite[Appendix, Example~1]{Na1962}.
\end{rem}


\end{document}